\documentclass{amsart}
\usepackage{amssymb}
\usepackage{graphics}

\theoremstyle{plain}

\newtheorem*{theorem}{Theorem}

\def    \R  {{\Bbb R}}
\def    \Z  {{\Bbb Z}}

\begin{document}
\title[The fundamental groups of contact toric manifolds]{The fundamental groups of contact toric manifolds}

\author{Hui Li} 
\address{School of mathematical Sciences\\
        Soochow University\\
        Suzhou, 215006, China.}
        \email{hui.li@suda.edu.cn}

\thanks{2010 classification. Primary:  53D10, 53D20; Secondary:
55Q05}
\keywords{contact toric manifold, Reeb type, contact moment map, moment cone, symplectic toric manifold.}

\begin{abstract}
Let $M$ be a connected compact contact toric manifold.  Most of such manifolds are of Reeb type.  We show that if  $M$ is of Reeb type, then 
$\pi_1(M)$ is finite cyclic, and we describe how to obtain the order of  
$\pi_1(M)$ from the moment map image.
\end{abstract}

 \maketitle
Let $M$ be a contact manifold, and $\alpha$ be a contact $1$-form on 
$M$. Let $T^k$ be a connected compact 
$k$-dimensional torus. If $T^k$ acts on $M$  preserving the contact form $\alpha$, then it preserves
the contact structure $\xi=ker (\alpha)$. 
 
A contact manifold $M$ of dimension $2n+1$ with an effective $T^{n+1}$-action preserving the contact structure is called a {\bf contact toric manifold}. If the Reeb vector field of a contact form on $M$ is generated by a one parameter subgroup action of $T^{n+1}$, then the contact $T^{n+1}$-manifold $M$ is called a {\bf contact toric manifold of Reeb type}.

Recall that a $2n$-dimensional symplectic manifold equipped with an effective Hamiltonian $T^n$-action  is called a symplectic toric manifold. Contact toric manifolds are the odd dimensional analog of symplectic toric manifolds. Compact symplectic toric manifolds and compact contact toric manifolds are both classified (\cite{D} and \cite{Lc}). Most of the compact contact toric manifolds are of Reeb type.

Compact symplectic toric manifolds are simply connected 
(\cite{A} p235, \cite{L0, L}).  In contrast, 
the fundamental groups of
connected compact contact toric manifolds are finite abelian if they are of Reeb type (\cite{Lh}), and are infinite abelian if they are not of Reeb type.
(The latter fact can be derived by listing the non-Reeb type manifolds using
the classification in \cite{Lc}.)

In this paper, we prove a result on the fundamental groups of compact contact toric manifolds of Reeb type. To describe the result, we define some
terms and state a known result as follows. 
Let $(M, \alpha)$ be a connected compact contact toric manifold of dimension $2n+1$. Let $\mathfrak t$ be the Lie algebra of the torus $T^{n+1}$, and $\mathfrak t^*$ be the dual Lie algebra. The {\bf contact moment map}  $\Phi\colon M\to\mathfrak t^*$ is defined to be
$$\langle\Phi(x), X\rangle = \alpha_x\big(X_M(x)\big), \,\, \forall\, x\in M,\, \mbox{and}\,\,\, \forall \,\, X\in\mathfrak t,$$
 where $X_M$ is the vector field on $M$ generated by the $X$-action.
The {\bf moment cone} of $\Phi$ is defined as
$$C(\Phi) =\big\{t\Phi(x)\,|\, t\geq 0,\, x\in M\big\}.$$
It is known (\cite{{BG}, {Lh}, {Lc}} etc.) that, if $M$ is of Reeb type, 
then $C(\Phi)$ is a {\it strictly convex rational good polyhedral cone} (of dimension $n+1$). {\it Strictly convex} means that $C(\Phi)$ contains no linear subspaces of $\mathfrak t^*$ of positive dimension, {\it polyhedral} means that $C(\Phi)$ is a cone over a polytope, {\it rational} means that the normal vectors of the facets of the cone  lie in the integral lattice of $\mathfrak t$, and 
 {\it good} means that for any codimension $l$ face $\mathcal F_l$ of $C(\Phi)$, the normal vectors of the facets which intersect at $\mathcal F_l$ form a $\Z$-basis of the lattice of an $l$-dimensional linear subspace of $\mathfrak t$.

\begin{theorem}
Let $(M,  \alpha)$ be a connected compact contact toric manifold of Reeb type with dimension $2n+1$. Let 
$$I=\big\{v_1, v_2, \cdots, v_d\big\}$$ be the
set of primitive inward normal vectors of the facets of the moment cone, ordered in the way that the first $n$ vectors  are the normal vectors of the facets which intersect at a ({\it any}) $1$-dimensional face of the moment cone.  Then
$$\pi_1(M)=\Z_k, \,\,\,\mbox{where}$$
$$ k=\gcd\big(\det [v_1, v_2, \cdots, v_n, v_{n+1}], \det [v_1, v_2, \cdots, v_n, v_{n+2}], \cdots,  \det [v_1, v_2, \cdots, v_n, v_d]\big).$$
\end{theorem}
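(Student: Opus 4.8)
My plan is to reduce the computation of $\pi_1(M)$ to a presentation coming from a handle/Morse-Bott decomposition of $M$, and then identify the relevant relations with the determinants $\det[v_1,\dots,v_n,v_j]$. The starting point is the classification result (\cite{Lc}, recalled in Theorem~\ref{mcone}): when $M$ is of Reeb type, the contact moment cone $C(\Phi)$ is a strictly convex rational polyhedral cone, and $M$ is the Boothby--Wang type object associated to the symplectic toric orbifold whose moment polytope is a transverse slice of the cone. Concretely, I would slice the cone by a hyperplane $\{\langle \cdot, R\rangle = 1\}$ (with $R$ the Reeb vector in the interior of the dual cone) to get a simple polytope $\Delta$, and recall that $M$ fibers over the symplectic toric orbifold $X_\Delta$ with $S^1$-fiber, the $S^1$ being generated by $R$. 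The edges of the cone (its $1$-dimensional faces) correspond to vertices of $\Delta$, i.e.\ to the fixed points of the maximal torus acting on $X_\Delta$.

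Next I would run the Morse-Bott argument directly on $M$. Choosing a generic component $\Phi_\eta = \langle \Phi, \eta\rangle$ of the contact moment map as a Morse-Bott function on $M$, the critical set is a union of $S^1$-orbits (the preimages of the vertices of $\Delta$), and the index of each such critical orbit is even. Since $M$ is connected and compact, $\pi_1(M)$ is generated by the $S^1$-orbit sitting over the minimum, with one relation introduced for each critical orbit of index $2$. Each index-$2$ critical orbit lies over an edge of the polytope $\Delta$ emanating from the minimal vertex; equivalently it corresponds, in the cone picture, to a $2$-dimensional face of $C(\Phi)$ containing the chosen $1$-face. The weight of the normal $S^1$-action along that edge — which is exactly what determines the relation in $\pi_1$ — is computed from the normal vectors: if the $1$-face is cut out by $v_1,\dots,v_n$ and the adjacent $2$-face drops $v_i$ in favor of $v_j$, the relevant multiplicity is $|\det[v_1,\dots,\widehat{v_i},\dots,v_n,v_j]|$ up to the unimodular change of basis that normalizes $v_1,\dots,v_n$. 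Taking the gcd over all such relations collapses the generator's order to $\Z_k$ with $k$ as stated; I would also check that relations coming from higher-codimension faces are consequences of these, and that the abelian (indeed cyclic) nature of $\pi_1(M)$ forces the presentation to be $\langle x \mid x^{k}\rangle$.

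There is an equivalent, perhaps cleaner, route I would develop in parallel: use van Kampen on $M = U \cup V$ where $U = \Phi_\eta^{-1}((-\infty, c))$ deformation retracts onto the minimal critical $S^1$-orbit and $V$ is the complement, and induct on the handle structure; or, descend to $X_\Delta$ and use the homotopy exact sequence $\pi_1(S^1) \to \pi_1(M) \to \pi_1(X_\Delta) \to 1$ together with the known fact that $X_\Delta$ is simply connected as an orbifold / has $\pi_1$ determined by the labels, so $\pi_1(M)$ is the cokernel of the map $\Z = \pi_1(S^1) \to H_1$ recording the Euler class data; this cokernel is $\Z_k$ by a lattice computation with the $v_i$. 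The main obstacle is bookkeeping: I must show the gcd does not depend on which $1$-face of the cone is chosen (the theorem asserts ``any'' $1$-dimensional face), and that the local index-$2$ weights are literally the numbers $\det[v_1,\dots,v_n,v_j]$ after the normalization $[v_1,\dots,v_n] \mapsto \mathrm{Id}$ — i.e.\ that no off-by-a-unit or sign ambiguity creeps in. Verifying the independence of the base $1$-face is the crux; I expect it follows because changing the $1$-face corresponds to an invertible integral change of coordinates on $\mathfrak{t}$ composed with a reshuffle of the $v_j$, under which the ideal generated by the $n{+}1$-fold determinants — hence their gcd — is preserved.
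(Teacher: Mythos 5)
Your route is genuinely different from the paper's, and it has gaps. The paper's actual proof of Theorem~\ref{B} is a short lattice computation: it quotes Lerman's result (Theorem~\ref{Le}) that $\pi_1(M)=\Z_{T}/\mathcal L$, where $\mathcal L$ is the sublattice of the integral lattice generated by all the $v_i$, and then uses the goodness of the cone (Theorem~\ref{mcone}) to extend $v_1,\dots,v_n$ to a $\Z$-basis $\{v_1,\dots,v_n,u\}$ of $\Z^{n+1}$, so that $v_j\equiv l_j u$ modulo the span of $v_1,\dots,v_n$ with $l_j=\pm\det[v_1,\dots,v_n,v_j]$, whence $\Z_{T}/\mathcal L=\Z_{\gcd(l_j)}$. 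In particular the independence of the chosen $1$-face, which you single out as ``the crux,'' is automatic there: the gcd is the order of the group $\Z_{T}/\mathcal L$, which is defined without any choice. You never invoke Lerman's theorem, and that is the ingredient that makes the general case go through.

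Both branches of your proposal have genuine holes. In the Morse--Bott branch, the paper's own version of this argument (Proposition~\ref{P}) only yields $\pi_1(M)=\Z/k\Z$ with $k=\gcd(k_1,\dots,k_m)$ for \emph{some} integers $k_i$, namely the images of the fiber circles of the negative sphere bundles in $\pi_1(M^{a_0})\cong\Z$; identifying these $k_i$ with $\det[v_1,\dots,v_n,v_j]$ is exactly the content you assert but do not establish. Your candidate local multiplicity $|\det[v_1,\dots,\widehat{v_i},\dots,v_n,v_j]|$ is not even well formed ($n$ columns of vectors in $\R^{n+1}$) and does not match the theorem's $(n+1)\times(n+1)$ determinants, and your claim that every index-$2$ critical orbit sits over an edge emanating from the minimal vertex is false in general --- in the paper's proof of Theorem~\ref{C} the downward edge from an index-$2$ vertex need not end at the minimum. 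In the fibration branch, the exact sequence $\pi_2(N)\to\pi_1(S^1)\to\pi_1(M)\to\pi_1(N)\to 1$ combined with the Euler class computation is precisely the paper's Section~\ref{regular} (Propositions~\ref{psb} and \ref{psbpi}), but it only works when the Reeb flow generates a \emph{free} $S^1$-action, i.e.\ when the moment polytope is Delzant; a general Reeb-type manifold only admits a contact form whose Reeb flow is locally free, the quotient is a toric orbifold, and the paper states explicitly in the appendix that the needed integral cohomology computation for toric orbifolds is not available. So neither branch, as proposed, proves Theorem~\ref{B} in full generality; the missing ingredient is Lerman's computation $\pi_1(M)=\Z_{T}/\mathcal L$ or a substitute for it.
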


The $3$-dimensional lens spaces are compact contact toric manifolds of Reeb type. Hence any finite cyclic group can be the fundamental group of a contact toric manifold of Reeb type.

\begin{proof}[Proof of Theorem]
Let $\Z_{T}\subset \mathfrak t$ be the integral lattice of the torus $T^{n+1}$, and $\mathcal L$ the sublattice of  $\Z_{T}$ generated by the elements in $I$.  By Lerman's Theorem \cite{Lh},  
$$\pi_1(M)= \Z_{T}/\mathcal L.$$

We identify $\Z_{T}=\Z^{n+1}\subset\R^{n+1}$. Since the moment cone $C(\Phi)$ is a good cone,
$v_1, \cdots, v_n$ is a $\Z$-basis of an $n$-dimensional subspace
of $\Z^{n+1}$. So there exists another  vector $u\in \Z^{n+1}$ such that $\{v_1, \cdots, v_n, u\}$ forms a $\Z$-basis of  $\Z^{n+1}$. Let  $\mathcal L'$ be the sublattice generated by the elements in $\{v_1, \cdots, v_n\}$. Then
$$\Z_{T}/\mathcal L' =\Z^{n+1}/\Z^n = \Z = \Z \langle u\rangle.$$
Since $\{v_1, \cdots, v_n, u\}$ is a $\Z$-basis of  $\Z^{n+1}$, for $\forall \,\, n+1\leq j\leq d$, we have
 $$v_j = l_j u \mod \mathcal L', \,\,\,\mbox{where $l_j\in\Z$}.$$
Let  $k = \gcd (l_j)_{j=n+1}^d$. Since the elements in $I$ span an $n+1$-dimensional vector space, 
at least one $l_j \neq 0$. So $k\neq 0$.
Then
$$\Z_{T}/\mathcal L = \Z \langle u\rangle/ k\Z \langle u \rangle = \Z_k$$
is finite cyclic. Moreover, notice that 
$$l_j =\pm \det [v_1, \cdots, v_n, v_j], \,\,\,\forall \,\,\, n+1 \leq j\leq d,$$
where  $[v_1, \cdots, v_n, v_j]$ denotes the matrix with column vectors $v_1, \cdots, v_n$ and $v_j$.
\end{proof}

\subsubsection*{Acknowledgement}  
I thank Reyer Sjamaar for a remark on the moment cone of a compact connected contact toric manifold of Reeb type,  
which helped me to shorten a proof I tried earlier.

This work is supported by the NSFC grant K110712116.


\begin{thebibliography}{99}
\bibitem{A} M. Audin, {\em Torus actions on symplectic manifolds},  Second edition, Progress in Mathematics, {\bf 93}, Birkh\"auser Verlag, 2004. 
\bibitem{BG} C. P. Boyer and K. Galicki, {\em A note on toric contact geometry},
J. Geom. Phys. {\bf 35} (2000), 288-298.
\bibitem{D} T. Delzant, {\em Hamiltoniens p\'eriodiques et images convexes de l'application moment}, Bull. Soc. Math. France {\bf 116} (1988), no. 3, 315-339.
\bibitem {Lc} E. Lerman, {\em Contact toric manifolds}, Journal of Symp. Geom., {\bf 1}, no. 4, 785-828, 2002.
\bibitem {Lh} E. Lerman, {\em Homotopy groups of $K$-contact toric manifolds},
Trans. Ameri. Math. Soc., {\bf 356}, no. 10, 4075-4083.
\bibitem{L0} H. Li, {\em $\pi_1$ of Hamiltonian $S^1$-manifolds}, Proceedings of Ameri. Math. Soc., vol. {\bf 131}, no. 11, 3579-3582, 2003.
\bibitem{L} H. Li, {\em The fundamental group of $G$-manifolds}, Communications in Contemporary Mathematics, {\bf 15}, no. 3, (2013) 1250056.
\end{thebibliography}
\end{document}